\documentclass[12pt]{article}
\usepackage{graphicx}
\usepackage{amsmath,amsthm,amssymb,enumerate}%, esint}
\usepackage{euscript,mathrsfs}
\usepackage{color}
\usepackage{dsfont}
\usepackage[left=2cm,right=2cm,top=3.5cm,bottom=3.5cm]{geometry}
\usepackage{color}
\usepackage[framemethod=tikz]{mdframed}
\allowdisplaybreaks

\usepackage{soul}

\catcode`\@=11 \@addtoreset{equation}{section}

\catcode`\@=12

\newtheorem{Theorem}{Theorem}[section]
\newtheorem{Proposition}[Theorem]{Proposition}
\newtheorem{Lemma}[Theorem]{Lemma}
\newtheorem{Corollary}[Theorem]{Corollary}

\theoremstyle{definition}
\newtheorem{Definition}[Theorem]{Definition}

\newtheorem{Remark}[Theorem]{Remark}

\newcommand{\bTheorem}[1]{
	\begin{Theorem} \label{T#1} }
	\newcommand{\eT}{\end{Theorem}}

\newcommand{\bProposition}[1]{
	\begin{Proposition} \label{P#1}}
	\newcommand{\eP}{\end{Proposition}}

\newcommand{\bLemma}[1]{
	\begin{Lemma} \label{L#1} }
	\newcommand{\eL}{\end{Lemma}}

\newcommand{\bCorollary}[1]{
	\begin{Corollary} \label{C#1} }
	\newcommand{\eC}{\end{Corollary}}

\newcommand{\bRemark}[1]{
	\begin{Remark} \label{R#1} }
	\newcommand{\eR}{\end{Remark}}

\newcommand{\bDefinition}[1]{
	\begin{Definition} \label{D#1} }
	\newcommand{\eD}{\end{Definition}}

\newcommand{\Del}{\Delta_x}

\newcommand{\wbfphi}{\widetilde{\bfphi}}

\newcommand{\bfphi}{\boldsymbol{\varphi}}

\newcommand{\bFormula}[1]{
	\begin{equation} \label{#1}}
	\newcommand{\eF}{\end{equation}}

\newcommand{\Ov}[1]{\overline{#1}}

\newcommand{\DC}{C^\infty_c}

\newcommand{\aleq}{\lesssim}

\newcommand{\toM}{\stackrel{\mathcal{M}_2}{\to}}
\newcommand{\toH}{\stackrel{\mathcal{H}}{\to}}

\newcommand{\vr}{\varrho}
\newcommand{\vre}{\vr_\ep}

\newcommand{\vue}{\vu_\ep}

\newcommand{\vu}{\vc{u}}

\newcommand{\vc}[1]{{\bf #1}}
\newcommand{\Ome}{\Omega_\ep}

\newcommand{\Div}{{\rm div}_x}
\newcommand{\Grad}{\nabla_x}

\newcommand{\dx}{\,{\rm d} {x}}

\newcommand{\dt}{\,{\rm d} t }

\newcommand{\intD}[1]{\int_D #1 \ \dx} 
\newcommand{\intO}[1]{\int_{\Omega} #1 \ \dx}

\newcommand{\intOe}[1]{\int_{\Omega_\ep} #1 \ \dx}

\newcommand{\D}{{\rm d}}

\newcommand{\ep}{\varepsilon}

\newcommand{\br}{ \nonumber \\ }

\def\softd{{\leavevmode\setbox1=\hbox{d}%
		\hbox to 1.05\wd1{d\kern-0.4ex{\char039}\hss}}}
%%%%%%%%%%%%%%%%%%%%%%%%%%%%%%%%%%%%%%%%%%%%%%%%%%%%%%%%%%%%%%%%%%%%%%%%%%%%%%%%%%%%%%%%%%%%%%%%%%%%%%%%%
\definecolor{Cgrey}{rgb}{0.85,0.85,0.85}
\definecolor{Cblue}{rgb}{0.50,0.85,0.85}
\definecolor{Cred}{rgb}{1,0,0}
\definecolor{fancy}{rgb}{0.10,0.85,0.10}

\newcommand\Cbox[2]{%
	\newbox\contentbox%
	\newbox\bkgdbox%
	\setbox\contentbox\hbox to \hsize{%
		\vtop{
			\kern\columnsep
			\hbox to \hsize{%
				\kern\columnsep%
				\advance\hsize by -2\columnsep%
				\setlength{\textwidth}{\hsize}%
				\vbox{
					\parskip=\baselineskip
					\parindent=0bp
					#2
				}%
				\kern\columnsep%
			}%
			\kern\columnsep%
		}%
	}%
	\setbox\bkgdbox\vbox{
		\color{#1}
		\hrule width  \wd\contentbox %
		height \ht\contentbox %
		depth  \dp\contentbox
		\color{black}
	}%
	\wd\bkgdbox=0bp%
	\vbox{\hbox to \hsize{\box\bkgdbox\box\contentbox}}%
	\vskip\baselineskip%
}

\mdfdefinestyle{MyFrame}{%
	linecolor=black,
	outerlinewidth=1pt,
	roundcorner=5pt,
	innertopmargin=\baselineskip,
	innerbottommargin=\baselineskip,
	innerrightmargin=10pt,
	innerleftmargin=10pt,
	backgroundcolor=white!20!white}

%%%%%%%%%%%%%%%%%%%%%%%%%%%%%%%%%%%%%%%%%%%%%%

%%%%%%%%%%%%%%%%%%%%%%%%%%%%%%%%%%%%%%%%

%\makeindex
\begin{document}

%%%%%%%%%%%%%%%%%%%%%%%%%%%%%%%%

\title{$\Gamma$--convergence for nearly incompressible fluids}

\date{}

\author{Peter Bella$^{1,}$\thanks{The work of P.B. was partially supported by the German Science Foundation DFG in context of the Emmy Noether Junior Research Group BE 5922/1-1.}
	\and Eduard Feireisl$^{2,}$\thanks{The work of E.F. was partially supported by the
		Czech Sciences Foundation (GA\v CR), Grant Agreement
		21--02411S. The Institute of Mathematics of the Academy of Sciences of
		the Czech Republic is supported by RVO:67985840. } \and Florian Oschmann$^{2,}$\thanks{The work of F.O. in Germany was partially supported by the German Science Foundation DFG in context of the Emmy Noether Junior Research Group BE 5922/1-1. After moving to the Czech Republic, the work was partially supported by the
		Czech Sciences Foundation (GA\v CR), Grant Agreement 22-01591S.}
}

\date{}

\maketitle

\medskip

\centerline{$^1$ TU Dortmund, Fakultät für Mathematik}

\centerline{Vogelpothsweg 87, 44227 Dortmund, Germany}

\medskip

\centerline{$^2$ Institute of Mathematics of the Academy of Sciences of the Czech Republic}

\centerline{\v Zitn\' a 25, CZ-115 67 Praha 1, Czech Republic}

\begin{abstract}
	
We consider the time-dependent compressible Navier--Stokes equations  in the low Mach number regime in a family of domains $\Omega_\ep\subset R^d$ converging in the sense of Mosco to a domain $\Omega \subset R^d$, $d \in \{2,3\}$. We show the limit is the incompressible Navier--Stokes system in $\Omega$.

\end{abstract}

%\bigskip

{\bf Keywords:} Compressible Navier--Stokes system, low Mach number limit, Mosco convergence 
%\bigskip

%\tableofcontents

\section{Introduction}
\label{P}

We consider the flow of a compressible viscous fluid in the low Mach number regime under domain perturbation. Our goal is to identify the the largest class possible of domain perturbations that have no impact on the incompressible limit. 

Specifically,  
let $(\Omega_\ep)_{\ep > 0}$ be a family of domains confined to a bounded set (ball) $D$, 
\begin{equation} \label{P1}
	\Omega_\ep \subset D \subset R^d,\ d \in \{2,3\} \ \mbox{for any}\ \ep > 0.
	\end{equation}
Along with $(\Omega_\ep)_{\ep > 0}$, we consider a family of solutions $(\vre, \vue)_{\ep > 0}$ of the barotropic Navier--Stokes system in the low Mach number regime: 
\begin{align} 
\partial_t \vre + \Div (\vre \vue) &= 0 	
	\label{P2} \\
\partial_t (\vre \vue) + \Div (\vre \vue \otimes \vue) + \frac{1}{{\rm Ma}^2(\ep)} \Grad p(\vre) &= \Div \mathbb{S}(\Grad \vue) + \vre \vc{f}	\label{P3}\\ 
\mathbb{S} (\Grad \vue) &= \mu \left( \Grad \vue + \Grad^t \vue - \frac{2}{d} \Div \vue \mathbb{I} \right) + \eta \Div \vue \mathbb{I} 
\label{P4}	
\end{align}
in $(0,T) \times \Omega_\ep$, supplemented with the no--slip boundary condition 
\begin{equation} \label{P5}
	\vue|_{\partial \Omega_\ep} = 0.
\end{equation}	
We suppose 
\begin{equation} \label{P6}
	\mu > 0,\ \eta \geq 0,\ p'(\vr) > 0 \ \mbox{for}\ \vr > 0,\ 
	p(\vr) \approx \vr^\gamma \ \mbox{for}\ \vr \to \infty.
\end{equation}

Intuitively, the minimal requirements on the domain convergence should guarantee stability of the associated Stokes problem. A convenient way to express this is Mosco's convergence.

\subsection{Mosco's convergence of fluid domains} 

The asymptotic behaviour of the family $(\Omega_\ep)_{\ep > 0}$ is described in terms of Mosco's convergence -- 
$\Gamma$--convergence associated to the Stokes operator. 

\begin{Definition}[\bf Convergence in the sense of Mosco] \label{DP1}

We say that $\Omega_\ep$ \emph{converge to a domain $\Omega \subset D$ in the sense of Mosco}, $\Omega_\ep \toM \Omega$, if: 

\begin{itemize}
	\item {\bf(M1)}  For any
	\[
	\bfphi_\ep \in W^{1,2}_0(\Omega_\ep; R^d) \subset W^{1,2}_0 (D; R^d), \ \bfphi_\ep \to \bfphi \ \mbox{weakly in}\ W^{1,2}_0 (D; R^d) 
	\]
there holds
\begin{equation} \label{P7}
\bfphi \in W^{1,2}_0 (\Omega; R^d).
\end{equation}

\item {\bf(M2)} For any $\bfphi \in C^1_c (\Omega; R^d)$, $\Div \bfphi = 0$, there exists a sequence 
$(\bfphi_\ep)_{\ep > 0}$,
\begin{equation} \label{P8}
\bfphi_\ep \in W^{1,2}_0(\Omega_\ep; R^d),\ \Div \bfphi_\ep = 0,\ 
\bfphi_\ep \to \bfphi \ \mbox{strongly in}\ W^{1,2}_0(D; R^d).
\end{equation}

\item {\bf(M2bis)} In addition to \eqref{P8}, the approximate sequence satisfies
\begin{align} 
	\bfphi_\ep \in W^{1,r}_0 (\Omega_\ep; R^d) 
	\ \mbox{for some}\ r > \frac{d \gamma}{2 \gamma - d},\ \gamma > \frac{d}{2}, \label{P9} \\ 
\limsup_{\ep \to 0} h(\ep) \| \bfphi_\ep \|_{W^{1,r}_0(\Omega_\ep; R^d) } < \infty 
\ \mbox{for some}\ h(\ep) \to 0 \ \mbox{as}\ \ep \to 0. \label{P10}
	\end{align}	
 	
	\end{itemize}

\end{Definition}

The conditions {\bf(M1)}, {\bf(M2)} represent the standard definition of Mosco's convergence. The extra hypotheses \eqref{P9}, \eqref{P10} assert that the $W^{1,r}$ can ``blow up'' at a rate 
$h(\ep)^{-1}$ independent of the approximated function $\bfphi$. Note that \eqref{P9}, \eqref{P10} follow from \eqref{P7}, \eqref{P8} (with $r=2$) if $\gamma>4d-6$, i.e., $\gamma > 2$ if $d=2$, and $\gamma > 6$ if $d=3$.

\subsection{Weak solutions to the Navier--Stokes system}

Next, we introduce the concept of weak solutions to the Navier--Stokes system. 

\begin{Definition}[\bf Weak solution] \label{PD2}
	
We say that $(\vre, \vue)$ is a \emph{weak solution} to the Navier--Stokes system \eqref{P2}--\eqref{P5} with the initial data
\begin{equation} \label{P11}	
\vre(0, \cdot) = \vr_{0, \ep},\ \vre \vue (0, \cdot) = (\vr \vu)_{0,\ep} 
\end{equation}
if the following holds:
\begin{itemize}
\item {\bf Integrability.}
\begin{align}
\vre \geq 0 \mbox{ a.a.~in } \Omega_\ep ,\ \vre &\in L^\infty(0,T; L^\gamma (\Omega_\ep)), \br
\vue &\in L^2(0,T; W^{1,2}_0 (\Omega_\ep; R^d)),\ \vre \vue \in L^\infty(0,T; L^{\frac{2 \gamma}{\gamma + 1}}(\Omega_\ep; R^d)).
		\label{P12}
\end{align}

\item {\bf Equation of continuity.}

\begin{equation} \label{P13}
\int_0^T \intOe{ \left[ \vre \partial_t \varphi + \vre \vue \cdot \Grad \varphi \right] } \dt = - \intOe{ \vr_{0, \ep} \varphi (0,\cdot) }
\end{equation}
for any $\varphi \in C^1_c([0,T) \times \Ov{\Omega}_\ep; R^d)$.

\item {\bf Momentum equation.}

\begin{align}
\int_0^T &\intOe{ \left[ \vre \vue \cdot \partial_t \bfphi + \vre \vue \otimes \vue : \Grad \bfphi + \frac{1}{{\rm Ma}^2(\ep)} p(\vre) \Div \bfphi \right] } \dt \br &=  
\int_0^T \intOe{ \left[ \mathbb{S}(\Grad \vue) : \Grad \bfphi - \vre \vc{f} \cdot \bfphi \right] } \dt - \intOe{ (\vr \vu)_{0,\ep} \cdot \bfphi(0, \cdot) }  \label{P14}
\end{align}
for any $\bfphi \in C^1_c([0,T) \times \Ome; R^d)$.

\item {\bf Energy inequality.}

\begin{align} 
&\intOe{ \left[ \frac{1}{2} \mathds{1}_{\{\vre > 0\}} \frac{ |\vre \vue|^2 }{\vre} + \frac{1}{{\rm Ma}^2(\ep) } P(\vre) \right] (\tau, \cdot) } + \int_0^\tau \intOe{ \mathbb{S}(\Grad \vue) : \Grad \vue } \dt 
\br &\quad \leq \intOe{ \left[  \frac{1}{2} \frac{ |(\vr \vu)_{0, \ep} |^2 }{\vr_{0, \ep}} + \frac{1}{{\rm Ma}^2(\ep) } P(\vr_{0,\ep}) \right] } + 
\int_0^\tau \intOe{ \vre \vc{f} \cdot \vue } \dt \label{P15}
\end{align}
for any $\tau \in (0,T]$, where 
\[
P'(\vr) \vr - P(\vr) = p(\vr).
\]

\end{itemize}
\end{Definition}

The existence of weak solutions for $p(\vr) \approx \vr^\gamma$, $\gamma > \frac{d}{2}$ considered on a general spatial domain was proved in \cite{FNP1}.

\subsection{Main result}

\subsubsection{Well--prepared initial data}

We consider \emph{well--prepared} initial data. Specifically, we suppose that there is a constant $\Ov{\vr}$ such that 
\begin{equation}  \label{P16}
	\frac{1}{{\rm Ma}^2(\ep) }\intOe{ \Big[ P(\vr_{0,\ep}) - P'(\Ov{\vr})(\vr_{0,\ep} - \Ov{\vr} ) - P(\Ov{\vr}) \Big]  } \to 0 
	\ \mbox{as}\ \ep \to 0.
	\end{equation}
Moreover, we suppose there is a solenoidal velocity field 
\begin{equation} \label{P17}
\vu_0 \in W^{1,p}(\Omega; R^d) \ \mbox{for any}\ 1 \leq p < \infty,\ \Div \vu_0 = 0,\ \vu_0|_{\partial \Omega} = 0,
\end{equation}
such that 
\begin{equation} \label{P18}
(\vr \vu)_{0, \ep} \to \Ov{\vr} \vu_0 \ \mbox{in}\ L^2(D; R^d),\ 
\intOe{  \frac{ |(\vr \vu)_{0, \ep} |^2 }{\vr_{0, \ep}} } \to \intO{ \Ov{\vr} |\vu_0 |^2 } \ \mbox{as}\ \ep \to 0.
\end{equation}	

\subsubsection{Convergence}

Having collected the necessary preparatory material, we are ready to state our main result.

\begin{mdframed}[style=MyFrame]
	
\begin{Theorem}[{\bf Asymptotic limit}] \label{TP1}	
	
Let $(\Ome)_{\ep > 0}$ be a family of bounded domains, 
\begin{equation} \label{P19}	
\Ome \subset D \subset R^d, \ d \in \{2,3\},\ \Ome \toM \Omega, 
\end{equation}
where $\Omega \subset R^d$ is a bounded domain of class $C^{2 + \nu}$. 
Let $(\vre, \vue)_{\ep > 0}$ be a family of weak solutions to the Navier--Stokes system \eqref{P2}--\eqref{P5} in $(0,T) \times \Ome$ 
specified in Definition \ref{PD2} emanating from the well--prepared initial data \eqref{P11}, where 
\begin{align} 
\vc{f} &\in L^\infty((0,T) \times D; R^d),\ \label{P20} \\
p &\in C[0,\infty) \cap C^1(0, \infty),\ p' (\vr) > 0 \ \mbox{for}\ \vr > 0,\ 
0 < \underline{p} \leq \liminf_{\vr \to \infty} \frac{p(\vr)}{\vr^\gamma } \leq   \limsup_{\vr \to \infty} \frac{p(\vr)}{\vr^\gamma } 
\leq \Ov{p}.
\label{P21}
\end{align}
Finally, let 
\begin{equation} \label{P22}
\gamma > \frac{d}{2},\ \limsup_{\ep \to 0} {\rm Ma}^{\frac{2}{\gamma}} (\ep) h^{-1}(\ep) < \infty,
	\end{equation}
where $h(\ep)$ has been introduced in \eqref{P10}.

Then there exists $0 < T_{\rm max} \leq \infty$, $T_{\rm max} = \infty$ if $d=2$, such that
\begin{align} 
\vre \to \Ov{\vr} \ &\mbox{in} \ L^\infty(0, T; L^\gamma(D))   , \label{P23} \\  \vue \to \vu \ &\mbox{in}\ L^2((0,T) \times D; R^d)
\ \mbox{and weakly in}\ L^2(0, T; W^{1,2}_0(D; R^d))
\label{P24}	
	\end{align}
for any $0 < T < T_{\rm max}$, where $\vu$ is the (unique) strong solution of the incompressible Navier--Stokes system 
\begin{align} 
	\Div \vu &= 0, \br
\Ov{\vr} \partial_t \vu + \Ov{\vr} \vu \cdot \Grad \vu + \Grad \Pi &= \Div \mathbb{S}(\Grad \vu) + \Ov{\vr} \vc{f}, \br 
\vu|_{\partial \Omega} &= 0, \label{P25}	
	\end{align}
in $[0, T_{\rm max}) \times \Omega$, starting from the initial datum 
\begin{equation} \label{P26}
	\vu(0, \cdot) = \vu_0.
\end{equation}

\end{Theorem}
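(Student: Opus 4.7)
\medskip

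\noindent\textbf{Proof plan.} The natural strategy is the relative energy (modulated energy) method, tailored to the moving-domain setting. I would first invoke standard theory for the incompressible Navier--Stokes system on the smooth target domain $\Omega$ (of class $C^{2+\nu}$) to obtain a unique strong solution $\vu$ on a maximal interval $[0, T_{\rm max})$ starting from $\vu_0$, with $T_{\rm max}=\infty$ when $d=2$; in particular $\vu, \partial_t \vu, \Grad^2 \vu \in L^q$ for all finite $q$ up to any $T<T_{\rm max}$. I would then extend $\vre$ by $\Ov{\vr}$ and $\vue$ by $0$ to the ball $D$. The energy inequality \eqref{P15} together with \eqref{P16}--\eqref{P18} yields the uniform bounds
\[
\vue \ \text{bounded in}\ L^2(0,T;W^{1,2}_0(D;R^d)),\qquad \frac{1}{{\rm Ma}^2(\ep)}\intOe{\bigl[P(\vre)-P'(\Ov{\vr})(\vre-\Ov{\vr})-P(\Ov{\vr})\bigr]} \leq C,
\]
which already gives \eqref{P23} (and the weak convergence half of \eqref{P24} up to a subsequence, to some limit $\vu^\star$) through the standard splitting of $\vre-\Ov\vr$ into an essential and a residual part.

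\medskip

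\noindent The heart of the proof is to build admissible test velocities $\vU_\ep(t,\cdot)\in W^{1,2}_0(\Ome;R^d)\cap W^{1,r}_0(\Ome;R^d)$ with $\Div \vU_\ep=0$ approximating the strong solution $\vu(t,\cdot)$, and then close a Gronwall estimate for the relative energy
\[
\mathcal{E}_\ep(\tau)=\intOe{\Bigl[\tfrac{1}{2}\vre|\vue-\vU_\ep|^2+\tfrac{1}{{\rm Ma}^2(\ep)}\bigl(P(\vre)-P'(\Ov{\vr})(\vre-\Ov{\vr})-P(\Ov{\vr})\bigr)\Bigr](\tau,\cdot)}.
\]
To construct $\vU_\ep$, I would first approximate $\vu(t,\cdot)$ in $W^{1,p}(\Omega)$ (any $p<\infty$) by divergence-free fields $\bfphi_\delta\in C^1_c(\Omega;R^d)$, then invoke \textbf{(M2)} and \textbf{(M2bis)} to produce $\bfphi_{\delta,\ep}\in W^{1,2}_0(\Ome;R^d)\cap W^{1,r}_0(\Ome;R^d)$ with $\Div \bfphi_{\delta,\ep}=0$, $\bfphi_{\delta,\ep}\to\bfphi_\delta$ strongly in $W^{1,2}$ and $\|\bfphi_{\delta,\ep}\|_{W^{1,r}_0}\leq C(\delta)h(\ep)^{-1}$; a diagonal selection then yields the desired $\vU_\ep$. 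The time dependence is handled by mollifying $\vu$ in time and applying the construction slice-by-slice.

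\medskip

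\noindent With $\vU_\ep$ in hand, I would derive the relative energy inequality in the standard way (starting from \eqref{P15} and using $\vU_\ep$ as a test function in \eqref{P14}, plus an equation for $\vU_\ep$ tested by $\vre\vue$): since $\Div\vU_\ep=0$, the large pressure term $\frac{1}{{\rm Ma}^2(\ep)}\int p(\vre)\Div\vU_\ep\,\dx$ disappears, and one arrives at
\[
\mathcal{E}_\ep(\tau)+\int_0^\tau\!\!\intOe{\S(\Grad(\vue-\vU_\ep)):\Grad(\vue-\vU_\ep)}\,\dt \ \leq\ \mathcal{E}_\ep(0)+\int_0^\tau \mathcal{R}_\ep(t)\,\dt,
\]
where $\mathcal{R}_\ep$ collects convective, viscous, forcing, and residual terms. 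The well-preparedness \eqref{P16}--\eqref{P18} together with the strong $W^{1,2}$-convergence $\vU_\ep(0,\cdot)\to\vu_0$ gives $\mathcal{E}_\ep(0)\to 0$. Terms of the form $\int\vre(\vue-\vU_\ep)\otimes(\vue-\vU_\ep):\Grad\vU_\ep$ are split into essential/residual parts and absorbed into $\mathcal{E}_\ep$ and the dissipation by H\"older and Korn; terms involving $\vu-\vU_\ep$ vanish in the limit by $M_2$.

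\medskip

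\noindent The critical and most delicate step is the control of the mixed term
\[
\frac{1}{{\rm Ma}^2(\ep)}\int_0^\tau\!\!\intOe{\bigl(p(\vre)-p'(\Ov{\vr})(\vre-\Ov{\vr})-p(\Ov{\vr})\bigr)\Div(\vU_\ep-\vu)}\,\dt
\]
and, more acutely, convective contributions of the form $\int\vre\vue\otimes(\vue-\vU_\ep):\Grad\vU_\ep$ where $\Grad\vU_\ep$ is only bounded in $L^r$ with blow-up rate $h(\ep)^{-1}$. Using Sobolev embedding $W^{1,2}_0(D)\hookrightarrow L^{2d/(d-2)}(D)$, the H\"older exponent matching $\frac{1}{\gamma}+\frac{d-2}{2d}\cdot 2+\frac{1}{r}=1$ is precisely $r=\frac{d\gamma}{2\gamma-d}$, i.e.\ the threshold in \eqref{P9}. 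The relative entropy controls the residual part of $\vre-\Ov{\vr}$ in $L^\gamma$ by ${\rm Ma}^{2/\gamma}(\ep)$, so these bad terms are bounded by $C\,{\rm Ma}^{2/\gamma}(\ep)h(\ep)^{-1}\leq C$ by \eqref{P22}, and after integration in time they vanish in the limit thanks to the extra factor coming from $\vU_\ep\to\vu$. The analogous mechanism handles the pressure term. A Gronwall argument then gives $\mathcal{E}_\ep(\tau)\to 0$ uniformly on $[0,T]$ for any $T<T_{\rm max}$, which upgrades the weak convergence in \eqref{P24} to strong convergence in $L^2((0,T)\times D)$ and identifies $\vu^\star$ with the strong solution $\vu$. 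The main obstacle, as expected, is keeping the convective/pressure remainders under control at the critical exponent $r=d\gamma/(2\gamma-d)$, which is exactly why hypothesis \textbf{(M2bis)} and the scaling \eqref{P22} are both needed.
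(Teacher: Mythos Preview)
Your approach is a legitimate alternative but differs substantially from the paper's. Rather than running a relative energy argument at the $\ep$--level against an $\ep$--dependent approximation $\vU_\ep$ of the strong solution, the paper proceeds by compactness: it tests the weak momentum equation only with tensor products $\psi(t)\bfphi_\ep(x)$, where $\bfphi_\ep$ is the {\bf (M2)/(M2bis)} approximation of a \emph{fixed} $\bfphi\in C^\infty_c(\Omega;R^d)$ with $\Div\bfphi=0$, passes to the limit to obtain a momentum balance on $\Omega$ carrying a nonnegative Reynolds defect $\mathcal{R}=\overline{\vr\vu\otimes\vu}-\Ov\vr\,\vu\otimes\vu$, passes similarly in the energy inequality to pick up the matching trace defect, and then invokes the weak--strong uniqueness principle for \emph{dissipative} solutions of the incompressible Navier--Stokes system (Abbatiello--Feireisl) against the local strong solution on $[0,T_{\rm max})$ to conclude $\mathcal{R}=0$ and hence strong $L^2$ convergence of $\vue$. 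The decisive estimate---bounding the residual convective contribution by ${\rm Ma}^{2/\gamma}(\ep)\,\|\Grad\bfphi_\ep\|_{L^q}$ with $q=\frac{d\gamma}{2\gamma-d}$ and then using the \emph{strict} inequality $r>q$ in {\bf (M2bis)} to interpolate with the $W^{1,2}$ convergence, so that $h(\ep)\|\Grad\bfphi_\ep\|_{L^q}\to 0$---is exactly the mechanism you identify, though the smallness comes from this interpolation rather than from an ``extra factor $\vU_\ep\to\vu$''. The paper's route is lighter because only time--independent spatial profiles need to be lifted to $\Ome$; your slice--by--slice construction of $\vU_\ep(t,\cdot)$ does not automatically produce a $\partial_t\vU_\ep$ with the regularity the relative energy inequality requires (the abstract {\bf (M2)} map is not linear), so you would in practice have to expand $\vu$ first in a finite tensor basis $\sum_j c_j(t)\bfphi_j(x)$ and lift each $\bfphi_j$ separately. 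Finally, the ``critical pressure term'' you single out is in fact identically zero, since $\vU_\ep$ is solenoidal on $\Ome$ and the pressure is only paired with $\Div\vU_\ep$; all the genuine work sits in the convective term.
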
	
	\end{mdframed}

\begin{Remark} \label{PR1}
	In \eqref{P23}, \eqref{P24},
	we have extended $\vre = \Ov{\vr}$, $\vue = 0$ outside $\Ome$.
\end{Remark}

The rest of the paper is devoted to the proof of Theorem \ref{TP1}. In Section \ref{B}, we derive the necessary uniform bounds independent of $\ep \to 0$. In Section \ref{C}, we show convergence to the target system.
In Section \ref{A}, we present some specific examples of the family $(\Ome)_{\ep > 0}$, including homogenization results for the optimal range of the adiabatic exponent $\gamma>\frac{d}{2}$ in the spirit of \cite{DieFeLu, LuSchw, NecOsch}, where no low Mach number limit is needed for large values of $\gamma>4d-6$, as well as domain stability of the governing equations as investigated in \cite{FeKaKrSt, FNP1, PLSO1}.

\section{Uniform bounds}
\label{B}

It is convenient to extend
\[
\vue = 0,\ \vre = \Ov{\vr}\ \mbox{in}\ D \setminus \Ome
\]
to work with functions defined on a single domain. Accordingly, the energy inequality \eqref{P15} reads
\begin{align} 
	&\intD{ \left[ \frac{1}{2} \mathds{1}_{\{\vre > 0\}} \frac{ |\vre \vue|^2 }{\vre}  + \frac{1}{{\rm Ma}^2(\ep) }\left(  P(\vre) - P'(\Ov{\vr}) (\vre - \Ov{\vr} )- P(\Ov{\vr}) \right) \right] (\tau, \cdot) } \br &\quad + \int_0^\tau \intD{ \mathbb{S}(\Grad \vue) : \Grad \vue } \dt 
	\br &\quad \leq \intOe{ \left[  \frac{1}{2} \frac{ |(\vr \vu)_{0, \ep} |^2 }{\vr_{0, \ep}} + \frac{1}{{\rm Ma}^2(\ep) } \left( P(\vr_{0,\ep}) - P'(\Ov{\vr}) (\vr_{0,\ep} - \Ov{\vr} ) -  P(\Ov{\vr}) \right) \right] } + 
	\int_0^\tau \intD{ \vre \vc{f} \cdot \vue } \dt \br  
&\quad \lesssim \intOe{ \left[  \frac{1}{2} \frac{ |(\vr \vu)_{0, \ep} |^2 }{\vr_{0, \ep}} + \frac{1}{{\rm Ma}^2(\ep) } \left( P(\vr_{0,\ep}) - P'(\Ov{\vr}) (\vr_{0,\ep} - \Ov{\vr} ) -  P(\Ov{\vr}) \right) \right] } \br &\quad + 
 \int_0^\tau \intD{ \vre  } \dt +  \int_0^\tau \intD{ \vre |\vue |^2 } \dt,	
	\label{B1}
\end{align}
where we have used hypothesis \eqref{P20}.

As $h(\ep) \to 0$, hypothesis \eqref{P22} yields ${\rm Ma}(\ep) \to 0$. Applying Gr\" onwall's lemma along with hypotheses \eqref{P16}, \eqref{P18} we get the estimate 
\begin{align} 
\sup_{\tau \in (0,T) }	&\intD{ \left[ \frac{1}{2} \mathds{1}_{\{\vre > 0\}} \frac{ |\vre \vue|^2 }{\vre}  + \frac{1}{{\rm Ma}^2(\ep) }\left(  P(\vre) - P'(\Ov{\vr}) (\vre - \Ov{\vr} )- P(\Ov{\vr}) \right) \right] (\tau, \cdot) }  \aleq 1, \br &\int_0^T \intD{ \mathbb{S}(\Grad \vue) : \Grad \vue } \dt \aleq 1
	\label{B2}
\end{align}
uniformly for $\ep \to 0$. As ${\rm Ma}(\ep) \to 0$ and $p$ complies with the growth condition 
\eqref{P21}, this implies \eqref{P23}, more specifically,
\begin{equation} \label{B3}
\vre \to \Ov{\vr} \ \mbox{in} \ L^\infty(0,T; L^\gamma (D)).
\end{equation}

In addition, applying Korn--Poincar\' e inequality, we get 
\begin{equation} \label{B4}
	\vue \to \vu \ \mbox{weakly in}\ L^2(0,T; W^{1,2}_0 (D; R^d)), 
	\end{equation}
passing to a suitable subsequence as the case may be. 

Finally, we claim that 
\begin{equation} \label{B5}
	\vu(t, \cdot) \in W^{1,2}_0(\Omega; R^d) \ \mbox{for a.a.}\ t \in (0,T). 
	\end{equation}
Indeed, consider a suitable time regularization $\eta_\delta * \vue$, where $\eta_\delta = \eta_\delta(t)$ is 
a family of regularizing kernels. In accordance with hypothesis {\bf (M1)} we get 
\[
\eta_\delta * \vue \to \eta_\delta * \vu \ \mbox{weakly in}\ 
W^{1,2}_0(D; R^d) \ \mbox{for any}\ t \in (0,T),\ \mbox{where } \eta_\delta * \vu \in W^{1,2}_0(\Omega; R^d) 
\]
for any $\delta > 0$. Letting $\delta \to 0$ yields \eqref{B5}.

\section{Convergence}
\label{C}

To complete the proof of Theorem \ref{TP1}, it is enough to perform the limit in \eqref{P13}--\eqref{P15}. 

\subsection{Asymptotic limit in the equation of continuity}

First, it follows from \eqref{B2} and \eqref{B3} that 
\begin{equation} \label{C1}
	\vre \vue \to \Ov{\vr} \vu \ \mbox{weakly-(*) in}\ L^\infty(0,T; L^{\frac{2 \gamma}{\gamma + 1}}(D; R^d)).
	\end{equation}
In particular, letting $\ep \to 0$ in the equation of continuity \eqref{P13} yields 
\begin{equation} \label{C2}
	\Div \vu = 0 \ \mbox{a.a. in}\ (0,T) \times \Omega.
	\end{equation}

\subsection{Asymptotic limit in the momentum equation}

Next, we perform the limit in the momentum equation \eqref{P14}. Let $B \in C^\infty [0, \infty)$ be a cut--off function such that
\begin{equation} \label{C3}
0 \leq	B(\vr) \leq 1,\ B(\vr) = \begin{cases} 1 & \mbox{if}\ 0 \leq \vr \leq \Ov{\vr} + 1, \\ 
	\in [0,1] & \mbox{if}\ \Ov{\vr} + 1 \leq \vr \leq \Ov{\vr} + 2, \\ 
	0 & \mbox{if}\ \vr \geq \Ov{\vr} + 2.
	\end{cases}
\end{equation}
We set 
\[
\vr_{\ep, B} = B(\vre) \vre ,\ \vr_{\ep,B}^c = (1 - B(\vre)) \vre. 
\]
Since $\vr_{\ep, B}$ is bounded by $\Ov{\vr}+2$, we have in accordance with \eqref{B3} 
\begin{equation} \label{C4} 
	\vr_{\ep,B}  \to \Ov{\vr} \ \mbox{in}\ L^\infty (0,T; L^q (D)) \ \mbox{for any}\ 
	1 \leq q < \infty.
\end{equation}	 
Moreover, by virtue of hypothesis \eqref{P21} and the uniform bounds \eqref{B2}, 
\begin{equation} \label{C5}
\intD{ |\vr_{\ep,B}^c |^\gamma } \aleq \intD{ \Big( P(\vre) - P' (\Ov{\vr})(\vre - \Ov{\vr} ) - P(\Ov{\vr}) \Big) 
} \aleq {\rm Ma}^2 (\ep) \ \mbox{uniformly for}\ \ep > 0.	
	\end{equation}

We are ready to perform the limit in the momentum equation. We consider a specific ansatz for the test functions, 
\[
\psi(t) \bfphi(x),\ \psi \in C^\infty_c [0,T), \ \bfphi \in C^\infty_c (\Omega; R^d),\ \Div \bfphi = 0.
\]
By virtue of hypothesis \eqref{P19}, there is an approximate sequence $(\bfphi_\ep)_{\ep > 0}$ enjoying the properties \eqref{P8}--\eqref{P10}. Accordingly, the quantities 
\[
(\psi (t) \bfphi_\ep (x) )_{\ep > 0} 
\]
are eligible test functions for \eqref{P14}. Using \eqref{P8}, hypotheses \eqref{P18}, \eqref{P20}, and the convergence \eqref{B4}, we easily obtain
\begin{align} 
	\int_0^T \intOe{ \vre \vc{f} \cdot (\psi \bfphi_\ep) } \dt &\to 
	\int_0^T \intO{ \Ov{\vr} \vc{f} \cdot (\psi \bfphi) } \dt, \br 
	\int_0^T \intOe{ \psi \mathbb{S}( \Grad \vue) : \Grad \bfphi_\ep } \dt &\to  
	\int_0^T \intO{ \mathbb{S}(\Grad \vu) : \Grad (\psi \bfphi) } \dt
\label{C6}	
	\end{align}
as $\ep \to 0$.

Next, rewrite the convective term in the form 
\begin{align}
\int_0^T &\intOe{ \psi \vre \vue \otimes \vue : \Grad \bfphi } \dt \br &= 
\int_0^T \intOe{ \psi \vr_{\ep,B} \vue \otimes \vue : \Grad \bfphi_\ep } \dt
+ \int_0^T \intOe{ \psi \vr_{\ep,B}^c \vue \otimes \vue : \Grad \bfphi_\ep } \dt.
\nonumber
\end{align}
Suppose $d=3$.
By H\"older's inequality and the Sobolev embedding relation $W^{1,2} \hookrightarrow L^6$, we get 
\begin{align}
	\left| \intOe{ \vr^c_{\ep, B} \vue \otimes \vue : \Grad \bfphi_\ep } \right|
	&\aleq \| \vr^c_{\ep,B} \|_{L^\gamma (\Ome)} \| \vue \|_{W^{1,2}_0 (\Ome; R^d)}^2 \| \Grad \bfphi_\ep \|_{L^q(\Ome; R^{d\times d} ) } \br &\mbox{with}\ q = \frac{3 \gamma}{2 \gamma - 3}.
\nonumber	
	\end{align}
Consequently, using hypothesis \eqref{P10} and the bound \eqref{C5}, we conclude 
\begin{align}
	\left| \intOe{ \vr^c_{\ep, B} \vue \otimes \vue : \Grad \bfphi_\ep } \right|
	&\aleq {\rm Ma}^{\frac{2}{\gamma} }(\ep) h^{-1}(\ep) \| \vue \|_{W^{1,2}_0 (\Ome; R^d)}^2 
	h(\ep) \| \Grad \bfphi_\ep \|_{L^q(\Ome; R^{d\times d} ) } \br &\mbox{with}\ q = \frac{3 \gamma}{2 \gamma - 3},
	\label{C7}	
\end{align}
where, by virtue of \eqref{P8}--\eqref{P10} and a simple interpolation argument,
\begin{equation} \label{C8}
h(\ep) \| \Grad \bfphi_\ep \|_{L^q(\Ome; R^{d\times d} ) } \to 0 \ \mbox{as}\ \ep \to 0.	
	\end{equation}
Repeating a similar argument for $d=2$ we conclude
\begin{equation} \label{C9} 
\int_0^T \intOe{ \psi \vr_{\ep,B}^c \vue \otimes \vue : \Grad \bfphi_\ep } \dt \to 0.	
	\end{equation}
Finally, we use \eqref{C4} to deduce 
\begin{equation} \label{C10} 
\int_0^T \intOe{ \psi \vr_{\ep,B} \vue \otimes \vue : \Grad \bfphi_\ep } \dt \to 
\int_0^T \intO{ \Ov{ \vr \vu \otimes \vu } : \Grad (\psi \bfphi) } \dt,	
	\end{equation}
where 
\begin{equation} \label{C11}
\vr_{\ep, B} \vue \otimes \vue \to \Ov{\vr \vu \otimes \vu} \ \mbox{weakly in}\ L^s(0,T; L^2(D; R^{d \times d})) 
\ \mbox{for some}\ s > 1.	
	\end{equation}
Indeed, the quantity $\vr_{\ep, B} \vue \otimes \vue$ is bounded in $L^\infty(0,T; L^1(D; R^{d \times d}))$ and also
in $L^1(0,T; L^3 (D; R^{d \times d}))$; hence the desired result follows by interpolation. 

Using similar arguments, we also get 
\[
\sqrt{\vre} \vue \to \sqrt{\Ov{\vr}} \vu \ \mbox{weakly-(*) in}\ L^\infty(0,T; L^2(D; R^{d \times d})).
\]
As the function 
\[
\vc{v} \mapsto \vc{v} \otimes \vc{v} : (\xi \otimes \xi),\ \xi \in R^d \ \mbox{is convex}
\]
for any fixed $\xi$, we conclude 
\begin{equation} \label{C12}
	\mathcal{R} \equiv  \Ov{\vr \vu \otimes \vu} - \Ov{\vr} \vu \otimes \vu \geq 0 
\ \mbox{in the sense of symmetric matrices.}
	\end{equation}

Finally, by virtue of similar arguments, it is easy to show
\begin{equation} \label{C13}
	\int_0^T \partial_t \psi \intOe{ \vre \vue \cdot \bfphi_\ep } \dt 
	\to 	\int_0^T \partial_t \psi \intO{ \Ov{\vr} \vu \cdot \bfphi } \dt
	\ \mbox{as}\ \ep \to 0.
	\end{equation} 
Using the density of the products $\psi(t) \bfphi(x)$ we deduce the limit momentum balance:
\begin{align} 
	\int_0^T &\intO{ \Big[ \Ov{\vr} \vu \cdot \partial_t \bfphi + \Ov{\vr} \vu \otimes \vu : \Grad \bfphi \Big] } \dt \br
&= \int_0^T \intO{ \Big[ \mathbb{S}(\Grad \vu) : \Grad \bfphi - \Ov{\vr} \vc{f} \cdot \bfphi - 
	\mathcal{R} : \Grad \bfphi \Big] } \dt - 
\intO{ \Ov{\vr} \vu \cdot \bfphi(0, \cdot) } \label{C14}
\end{align}
for any $\bfphi \in C^1_c([0,T) \times \Omega; R^d)$, $\Div\bfphi = 0$.

\subsection{Asymptotic limit in the energy inequality}

Finally, we multiply the energy inequality \eqref{P15} on $\psi \in C_c(0,T)$, $\psi \geq 0$, 
$\int_0^T \psi \dt = 1$ and integrate obtaining 
\begin{align} 
\int_0^T \psi(\tau)	&\intOe{ \left[ \frac{1}{2} \vre |\vue |^2 + \frac{1}{{\rm Ma}^2(\ep) } P(\vre) \right] (\tau, \cdot) } 
\ \D \tau + \int_0^T \psi (\tau) \int_0^\tau \intOe{ \mathbb{S}(\Grad \vue) : \Grad \vue } \dt \ \D \tau
	\br &\quad \leq \intOe{ \left[  \frac{1}{2} \frac{ |(\vr \vu)_{0, \ep} |^2 }{\vr_{0, \ep}} + \frac{1}{{\rm Ma}^2(\ep) } P(\vr_{0,\ep}) \right] } + 
\int_0^T \psi (\tau)	\int_0^\tau \intOe{ \vre \vc{f} \cdot \vue } \dt \ \D \tau. \nonumber
\end{align}
Using hypotheses \eqref{P16}, \eqref{P18} we perform the limit $\ep \to 0$ to deduce
\begin{align} 
	\int_0^T \psi(\tau)	&\intO{ \left[ \frac{1}{2} \Ov{\vr} |\vu |^2 + \mathcal{C} \right] (\tau, \cdot) } 
	\ \D \tau + \int_0^T \psi (\tau) \int_0^\tau \intO{ \mathbb{S}(\Grad \vu) : \Grad \vu } \dt \ \D \tau
	\br &\quad \leq \intO{ \frac{1}{2} \Ov{\vr} |\vu_0|^2  } + 
	\int_0^T \psi (\tau)	\int_0^\tau \intO{ \Ov{\vr} \vc{f} \cdot \vu } \dt \ \D \tau, \label{C15}
\end{align}
where 
\begin{equation} \label{C16}
	\mathcal{C} = \frac{1}{2} \Ov{\vr |\vu|^2 } - \frac{1}{2} \Ov{\vr} |\vu|^2 \approx {\rm trace}[ \mathcal{R} ].
\end{equation}	
As the function $\psi$ is arbitrary, we may infer that
\begin{align} 
	&\intO{ \left[ \frac{1}{2} \Ov{\vr} |\vu |^2 + \mathcal{C} \right] (\tau, \cdot) } 
+ \int_0^\tau \intO{ \mathbb{S}(\Grad \vu) : \Grad \vu } \dt 
	\br &\quad \leq \intO{ \frac{1}{2} \Ov{\vr} |\vu_0|^2  } + 
	\int_0^\tau \intO{ \Ov{\vr} \vc{f} \cdot \vu } \dt \ \mbox{for a.a.}\ \tau \in (0,T). \label{C17}
\end{align}	

\subsection{Application of the weak--strong uniqueness principle}

In view of \eqref{C2}, \eqref{C14}, and \eqref{C16}, \eqref{C17}, the limit $\vu$ is a dissipative solution of the 
incompressible Navier--Stokes system in the sense of \cite{AbbFei2}. Moreover, as $\vu_0$ as well as $\Omega$ are smooth enough, the incompressible Navier--Stokes system \eqref{P25}, \eqref{P26} admits a regular solution defined on a maximal interval $[0, T_{\rm max})$, where $T_{\rm max} = \infty$ if $d=2$. Consequently, the weak--strong uniqueness principle \cite[Theorem 2.8]{AbbFei2} completes the proof of Theorem \ref{TP1}.

\section{Applications}
\label{A}

We start by a short discussion on the hypotheses {\bf (M1), (M2), (M2bis)} stipulated by Mosco's convergence. As observed by Henrot and Pierre \cite[Section 3.5.2]{HenPie}, the conditions 
are ``almost necessary'' for Theorem \ref{TP1} to hold. The next result shows some necessary conditions for {\bf (M2)} to imply {\bf (M2bis)}.

\begin{Lemma} \label{LA1}
	
	Let $(\Ome)_{\ep > 0}$ be a family of domains satisfying: 
	\begin{itemize}
		
		\item 
		\[
		\Omega,\ \Ome \ \mbox{are of class}\ C^2 \ \mbox{for any}\ \ep > 0; 
		\]
		\item 
		
		\[
		\mbox{hypothesis {\bf (M2)} is satisfied.}
		\]
		
		\end{itemize}
	
	Then there exists a function $h(\ep)$ such that {\bf (M2bis)} holds.
	
	\end{Lemma}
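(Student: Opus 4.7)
The plan is to upgrade each approximating sequence supplied by (M2) via a regularization step that exploits the $C^2$-smoothness of each $\Omega_\ep$ individually, and then to extract a single blow-up rate $h(\ep)$ that works for every $\bfphi$ by a double diagonal argument.

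The regularization rests on a classical density fact: for a bounded $C^2$ domain $U \subset R^d$, the smooth compactly supported divergence-free fields are dense in $W^{1,2}_0(U; R^d) \cap \ker \Div$ (proved by mollification plus a Bogovskii correction inside a tubular collar; see, e.g., Galdi's treatise). Applied in each $\Omega_\ep$ to the (M2)-sequence $\bfphi_\ep^{(0)}$ associated with a fixed $\bfphi \in C^1_c(\Omega; R^d)$, $\Div \bfphi = 0$, this yields $\widetilde \bfphi_\ep \in C^\infty_c(\Omega_\ep; R^d)$, $\Div \widetilde \bfphi_\ep = 0$, with
\[
\| \widetilde \bfphi_\ep - \bfphi_\ep^{(0)} \|_{W^{1,2}_0(\Omega_\ep; R^d)} \leq \ep.
\]
Thus $\widetilde \bfphi_\ep \to \bfphi$ strongly in $W^{1,2}_0(D; R^d)$, and $\widetilde \bfphi_\ep \in W^{1,r}_0(\Omega_\ep; R^d)$ for every $r \in [1, \infty)$, with a \emph{finite but a priori unquantified} norm $M_\bfphi(\ep) := \| \widetilde \bfphi_\ep \|_{W^{1,r}_0(\Omega_\ep; R^d)}$.

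Next, pick a countable subset $\{\bfphi^{(k)}\}_{k \geq 1}$ of $C^1_c(\Omega; R^d) \cap \ker \Div$ which is dense with respect to the $W^{1,2}_0(D; R^d)$ norm, apply the construction above to each $\bfphi^{(k)}$ and write $M_k(\ep) := M_{\bfphi^{(k)}}(\ep)$. Fix an arbitrary $K : (0, \ep_0) \to \mathbb{N}$ with $K(\ep) \nearrow \infty$ as $\ep \to 0$ (e.g., $K(\ep) = \lceil \ep^{-1} \rceil$) and define
\[
h(\ep) := \min_{1 \leq k \leq K(\ep)} \frac{1}{k \bigl(1 + M_k(\ep)\bigr)}.
\]
Then $h(\ep) \leq 1/K(\ep) \to 0$, and for every fixed $k_0$, as soon as $K(\ep) \geq k_0$ one has $h(\ep) M_{k_0}(\ep) \leq 1/k_0$, so $h$ serves as a universal blow-up rate on the dense countable set.

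For a general $\bfphi$, select a subsequence $\bfphi^{(k_n)} \to \bfphi$ strongly in $W^{1,2}_0(D; R^d)$ and set $\bfphi_\ep := \widetilde \bfphi_\ep^{(k_{n(\ep)})}$, where $n(\ep) \nearrow \infty$ is taken slowly enough to enforce both $k_{n(\ep)} \leq K(\ep)$ and $\| \widetilde \bfphi_\ep^{(k_{n(\ep)})} - \bfphi^{(k_{n(\ep)})} \|_{W^{1,2}_0(D; R^d)} \leq 1/n(\ep)$; both are arrangeable because, for each fixed $k$, the inner sequence $\widetilde \bfphi_\ep^{(k)} \to \bfphi^{(k)}$ in $W^{1,2}_0(D; R^d)$ as $\ep \to 0$. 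The triangle inequality then gives $\bfphi_\ep \to \bfphi$ strongly in $W^{1,2}_0(D; R^d)$, while by the construction of $h$,
\[
h(\ep)\, \| \bfphi_\ep \|_{W^{1,r}_0(\Omega_\ep; R^d)} = h(\ep) M_{k_{n(\ep)}}(\ep) \leq \frac{1}{k_{n(\ep)}} \to 0,
\]
which is stronger than what (M2bis) requires. The main obstacle is really the density step: one needs to be certain that the classical mollify-plus-Bogovskii construction produces smooth compactly supported divergence-free approximations in each $\Omega_\ep$, which is exactly where the $C^2$ hypothesis (locally, for fixed $\ep$; no uniformity in $\ep$ is needed) enters.
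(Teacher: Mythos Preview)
Your argument is correct, but it differs substantially from the paper's. The paper does not regularise the $\mathbf{(M2)}$-sequence at all; instead it \emph{replaces} it by the Stokes projection
\[
\bfphi_\ep = \arg\min_{\vc h \in W^{1,2}_0(\Omega_\ep;R^d),\ \Div\vc h = 0} \|\Grad\bfphi - \Grad\vc h\|_{L^2(\Omega_\ep)}^2,
\]
which solves $-\Del\bfphi_\ep + \Grad\Pi_\ep = -\Del\bfphi$ in $\Omega_\ep$. Hypothesis $\mathbf{(M2)}$ forces the value of this infimum to vanish as $\ep\to 0$, giving the strong $W^{1,2}$ convergence, while the $C^2$ assumption is invoked through the $L^r$ regularity theory for the Stokes operator to obtain the linear estimate
\[
\|\bfphi_\ep\|_{W^{1,r}(\Omega_\ep;R^d)} \le C(\Omega_\ep,r)\,\|\bfphi\|_{C^1_c(\Omega;R^d)},
\]
so that $h(\ep)=C(\Omega_\ep,r)^{-1}$ works uniformly in $\bfphi$.

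What each approach buys: the paper's route is shorter and yields a concrete, \emph{linear} bound in $\bfphi$, identifying $h(\ep)$ directly with the inverse of the Stokes regularity constant of $\Omega_\ep$; this is the natural quantity if one later wants explicit rates. Your route avoids elliptic regularity entirely, relying only on the density of smooth solenoidal fields (for which Lipschitz regularity of $\Omega_\ep$ would already suffice) together with a diagonal extraction; the price is that $h(\ep)$ is produced abstractly and carries no geometric information about $\Omega_\ep$.
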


\begin{proof}
	
	Given a function $\bfphi \in \DC(\Omega; R^d)$, $\Div \bfphi = 0$, we have to construct a sequence $(\bfphi_\ep)_{\ep > 0}$ satisfying \eqref{P8}--\eqref{P10}. 
	For each $\ep > 0$ we consider the minimization problem 
	\[
	I_\ep = {\rm inf}_{ \vc{h} \in W^{1,2}_0(\Ome; R^d), \Div \vc{h} = 0} \| \Grad {\bfphi} - \Grad \vc{h} \|^2_{L^2(D; R^{d \times d})} .
 \] 
	On the one hand, as {\bf (M2)} holds, $I_\ep \to 0$ as $\ep \to 0$. On the other hand, 
	\begin{equation} \label{A1a}
	\| \Grad {\bfphi} - \Grad \vc{h} \|_{L^2(D; R^{d \times d})}^2 = 
	\| \Grad {\bfphi} - \Grad \vc{h} \|_{L^2(\Ome; R^{d \times d})}^2 + \| \Grad \bfphi \|_{L^2(\Omega \setminus \Ome; R^{d \times d})}^2,  
	\end{equation}
	therefore
	\begin{equation} \label{A1}
	\| \Grad \bfphi \|_{L^2(\Omega \setminus \Ome; R^{d \times d})}^2 \to 0 \ \mbox{as}\ \ep \to 0.	
	\end{equation}

Consider
	\[
		\bfphi_{\ep} = {\rm arg min}_{ \vc{h} \in W^{1,2}_0(\Ome; R^d), \Div \vc{h} = 0}\| \Grad {\bfphi} - \Grad \vc{h} \|^2_{L^2(\Ome; R^{d \times d})}. 
		\]
It follows from \eqref{A1a}, \eqref{A1} that $(\bfphi_\ep)_{\ep > 0}$ satisfies {\bf (M2)}. Moreover, $\bfphi_\ep$
is the unique solution of the Stokes problem 
	\begin{equation} \label{A2}
		- \Del \bfphi_\ep + \Grad \Pi_\ep = -\Del \bfphi,\ \Div \bfphi_\ep = 0 \ \mbox{in}\ \Ome,\ \bfphi_\ep|_{\partial \Ome} = 0.
		\end{equation}
As $\Ome$ are regular domains, the standard $L^r-$estimates for the Stokes operator give rise to  
\begin{equation} \label{A3}
	\| \bfphi_\ep \|_{W^{1,r}(\Omega; R^d)} \leq C(\Ome, r) \| \bfphi \|_{C^1_c(\Omega; R^d)},\ 1 < r < \infty,
	\end{equation}	
which yields the desired conclusion {\bf (M2bis)} with $h(\ep) = C(\Ome, r)^{-1}$.	
	
	\end{proof}

\subsection{Convergence in the Hausdorff (complementary) topology}

Let $(\Ome)_{\ep > 0}$ be a family of domains in $R^d$, $d \in \{2,3\}$ contained in a fixed bounded domain $D$. We say that 
$\Omega_\ep \toH \Omega$ in the Hausdorff complementary topology if
\[
d_H [ D \setminus \Omega_\ep; D \setminus \Omega ] \to 0 \ \mbox{as}\ \ep \to 0,
\]
where
\[
d_H [K_1; K_2] = \sup \left\{ \sup_{x \in K_1} {\rm dist}[x; K_2] ; \sup_{x \in K_2} {\rm dist}[x, K_1]    \right\}. 
\]
As shown in \cite[Chapter 2, Corollary 2.2.26]{HenPie}, \emph{any} uniformly bounded sequence of domains contains a subsequence converging to a certain $\Omega$ in the 
Hausdorff topology.

The important property of the Hausdorff convergence is that any compact set in the limit domain $\Omega$ is ultimately contained in $\Omega_\ep$ for $\ep > 0$ small enough, see \cite[Chapter 2, Proposition~2.2.18]{HenPie}:
\begin{equation} \label{A5}
	\Ome \toH \Omega \ \Rightarrow \ \ \mbox{for any compact}\ K \subset \Omega,\ \mbox{there exists}\ \ep_K > 0 
	\ \mbox{such that}\ K \subset \Ome \ \mbox{for any}\ 0 < \ep < \ep_K.
\end{equation}
In particular, if $\Ome \toH \Omega$, then {\bf (M2)}, {\bf (M2bis)} are automatically satisfied for \emph{any} $h(\ep) \to 0$.
As for {\bf (M1)} and $d=2$, there is a very elegant criterion due to \v Sver\' ak \cite{Sver} (see also \cite[Chapter 3, Theorem~3.4.14 and Proposition~3.5.5]{HenPie}):
\begin{align} 
	\Ome \subset D \subset R^2, \ \Ome \toH \Omega,\ &\# \{ \ \mbox{connected components}\ D \setminus \Ome \} \leq k 
	\ \mbox{uniformly for}\ \ep \to 0 \br &\Rightarrow \ \mbox{{\bf (M1)} holds.} \label{A6}
	\end{align}
If $d=3$, similar criteria have been obtained by Bucur and Zolesio \cite{BZ} in terms of capacity.	

\subsection{Domains perforated by tiny holes}

The example is inspired by homogenization, where the uniform boundedness assumption in \eqref{A6} might not be satisfied anymore. We consider 
\[
\Ome = \Omega \setminus K_\ep ,\ K_\ep \subset \Omega \ \mbox{a compact set.}
\]
Obviously, the property {\bf (M1)} is always satisfied. We therefore focus on {\bf (M2), (M2bis)}. 

Let us start with $d = 3$. 

\begin{Proposition} \label{PA1}
	Let 
\[
\Ome = \Omega \setminus K_\ep ,\ K_\ep \subset \Omega \subset R^3. 
\]
Suppose there exists a family of balls $B_{r_i(\ep)}(x_i(\ep))$ of radii $r_i(\ep)$, $i=1,\dots, N(\ep)$ such that 
\begin{align} 
K_\ep &\subset \bigcup_{i = 1}^{N(\ep)} B_{r_i(\ep)}(x_i(\ep)), \br {\rm dist}[ x_i , \partial \Omega] &> 2 r_i(\ep) , 
\ {\rm dist}[ x_i, x_j ] > 2 (r_i(\ep) + r_j(\ep) ) \ \mbox{for}\ i \ne j, \br 
\sum_{i=1}^{N(\ep)} r_i(\ep) &\to 0 \ \mbox{as}\ \ep \to 0 . \label{A7}	
	\end{align}

Then {\bf (M2)} holds. Moreover, if 
\begin{equation} \label{A8}
	h(\ep) = \min_{1 \leq i \leq N(\ep)} r_i(\ep), 
	\end{equation}
then	 {\bf (M2bis)} is satisfied for any $\gamma > 3/2$, specifically, \eqref{P10} holds for any $r < \infty$.
	\end{Proposition}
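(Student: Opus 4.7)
The plan is to build the approximating sequence by the standard capacity-cut-off plus Bogovski\u\i{} correction recipe from homogenization theory. Given $\bfphi \in C^1_c(\Omega; \R^d)$ with $\Div \bfphi = 0$, the separation assumption \eqref{A7} guarantees that the annuli $A_i := B_{2 r_i(\ep)}(x_i(\ep)) \setminus B_{r_i(\ep)}(x_i(\ep))$ are pairwise disjoint and contained in $\Omega$, so the construction localizes on each $A_i$ independently.

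On $A_i$ I take the harmonic capacitary potential
\begin{equation*}
g_{\ep,i}(x) = \frac{1/r_i(\ep) - 1/|x - x_i(\ep)|}{1/r_i(\ep) - 1/(2 r_i(\ep))},
\end{equation*}
extended by $0$ inside $B_{r_i(\ep)}(x_i(\ep))$ and by $1$ outside $B_{2 r_i(\ep)}(x_i(\ep))$, and call the resulting Lipschitz function $G_\ep$. A direct computation in spherical coordinates yields
\begin{equation*}
\int_{A_i} |\Grad g_{\ep,i}|^s \approx r_i(\ep)^{3-s}, \quad 1 \leq s < \infty.
\end{equation*}
Next set $\bfphi_\ep := G_\ep \bfphi - \bfw_\ep$, where $\bfw_\ep = \sum_i \bfw_{\ep,i}$ and each $\bfw_{\ep,i} \in W^{1,s}_0(A_i; \R^d)$ solves $\Div \bfw_{\ep,i} = \Grad g_{\ep,i} \cdot \bfphi$ via the Bogovski\u\i{} operator on $A_i$; the required compatibility
\begin{equation*}
\int_{A_i} \Grad g_{\ep,i} \cdot \bfphi = \int_{\partial B_{2 r_i}(x_i)} \bfphi \cdot \vc{n} = \int_{B_{2 r_i}(x_i)} \Div \bfphi = 0
\end{equation*}
uses that $\bfphi$ is solenoidal. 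By construction $\bfphi_\ep \in W^{1,s}_0(\Omega_\ep; \R^d)$ with $\Div \bfphi_\ep = 0$. Taking $s = 2$, the bound $\|\Grad G_\ep\|_{L^2(D)}^2 \approx \sum_i r_i(\ep) \to 0$ combined with $|G_\ep - 1| \leq \mathbf{1}_{\bigcup_i B_{2 r_i(\ep)}(x_i(\ep))}$ and $\sum_i r_i(\ep)^3 \leq |D|/|B_1|$ (from disjointness of the $B_{r_i(\ep)}$) gives $G_\ep \to 1$ strongly in $W^{1,2}(D)$; the Bogovski\u\i{} estimate then yields $\|\bfw_\ep\|_{W^{1,2}_0} \to 0$, so $\bfphi_\ep \to \bfphi$ strongly in $W^{1,2}_0(D; \R^d)$, establishing \textbf{(M2)}.

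The key to \textbf{(M2bis)} with $h(\ep) = \min_i r_i(\ep)$ is the elementary inequality
\begin{equation*}
h(\ep)^s \, r_i(\ep)^{3-s} \leq r_i(\ep)^3 \quad \mbox{for every } s \geq 0,
\end{equation*}
which is just $h(\ep)^s \leq r_i(\ep)^s$ multiplied by $r_i(\ep)^{3-s} > 0$. Summing over $i$ and invoking $\sum_i r_i(\ep)^3 \leq |D|/|B_1|$ yields
\begin{equation*}
h(\ep)^s \| \Grad G_\ep \|_{L^s(D)}^s \leq C,
\end{equation*}
uniformly in $\ep$, for every finite $s$. A scale-invariant Bogovski\u\i{} estimate on each $A_i$, obtained by rescaling to the fixed reference annulus $B_2 \setminus B_1$, transfers the same bound to $\bfw_\ep$, whence $h(\ep) \|\bfphi_\ep\|_{W^{1,s}_0(\Omega_\ep)} \leq C$ for every $1 \leq s < \infty$; since $\gamma > 3/2$ makes the threshold $\frac{3 \gamma}{2 \gamma - 3}$ finite, such an $s$ is available.

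The main obstacle I expect is precisely the scale-invariance of the Bogovski\u\i{} constant: one must carefully rescale each annulus $A_i$ to $B_2 \setminus B_1$ and track how the $L^s$ norms of both the datum $\Grad g_{\ep,i} \cdot \bfphi$ and the solution $\bfw_{\ep,i}$ transform under the dilation to confirm that the constant depends only on $s$ and on the reference geometry. With that uniformity in hand, the remaining ingredients---the capacitary identity, disjointness of the holes, and the simple power inequality above---combine without further difficulty.
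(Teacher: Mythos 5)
Your construction is essentially the paper's own proof: a cut-off supported in the annulus $B_{2r_i(\ep)}(x_i(\ep)) \setminus B_{r_i(\ep)}(x_i(\ep))$ (whose gradient scales like $1/r_i(\ep)$) multiplied by $\bfphi$, corrected by a scale-invariant Bogovski\u{\i} operator on that annulus with the same mean-zero compatibility check, the same $\sum_i r_i(\ep)^{3-q}$ estimates, and the same inequality $h(\ep)^q r_i(\ep)^{3-q} \leq r_i(\ep)^3$ for {\bf (M2bis)}. The only (immaterial in $d=3$) difference is your use of the harmonic capacitary profile instead of the paper's fixed smooth cut-off $H$, and all the steps you flag as delicate (scale invariance of the Bogovski\u{\i} constant) are exactly what the paper asserts.
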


\begin{proof}
	
	Let $\bfphi \in C^1_c(\Omega; R^d)$, $\Div \bfphi = 0$ be given. Consider a function 
\begin{align} 
	H &\in C^\infty(R), \ 0 \leq H(Z) \leq 1,\ H'(Z) = H'(1- Z) \ \mbox{for all}\ Z \in R, \br 
	H(Z) &= 0 \ \mbox{for} \ - \infty < Z \leq \frac{1}{4},\ 
	H(Z) = 1 \ \mbox{for}\ \frac{3}{4} \leq Z < \infty.
	\label{A9}
\end{align}
Set 
\begin{equation} \label{A10}
\vc{E}_{\ep,i}[\bfphi] =  \bfphi(x) H \left( \frac{ | x - x_i(\ep) |}{r_i(\ep)} - 1 \right), 
\end{equation}
and 
\begin{equation} \label{A11}
	\bfphi_{\ep,i} = \vc{E}_{\ep,i}[\bfphi] - \mathcal{B}_{\ep,i}[\Div \vc{E}_{\ep,i} [\bfphi]].
\end{equation}
Here $\mathcal{B}_{\ep,i}$ denotes the so--called Bogovski\u{\i} operator defined on the set 
\[
Q_{i,\ep} = B_{2 r_i(\ep)}(x_i(\ep)) \setminus B_{r_i(\ep)} (x_i(\ep))
\]
and enjoying the following properties: 
\begin{align} 
\mathcal{B}_{\ep,i} [h] \in W^{1,q}_0 (Q_{i,\ep}; R^d),\ \Div \mathcal{B}_{\ep,i} [h] = h \ \mbox{for any}\ h \in L^q(Q_{i,\ep}),\ \int_{Q_{i,\ep}} h \dx = 0,	\label{A12} \\
\| \Grad \mathcal{B}_{\ep,i} [h] \|_{L^q(Q_{i,\ep}; R^{d \times d})} \leq c(q) \| h \|_{L^q(Q_{i,\ep})} ,\ 1 < q < \infty,  \label{A13}
	\end{align}
where the constant in \eqref{A13} is independent of $i$ and $\ep$.
It is easy to check that
\begin{align} 
	\bfphi_{\ep, i} &= 0 \ \mbox{in}\ B_{r_i(\ep)}(x_i(\ep)) \ , \br 
	\bfphi_{\ep, i} &= \bfphi \ \mbox{in}\ \Omega \setminus B_{2 r_i(\ep)}(x_i(\ep)), \br 
	\Div \bfphi_{\ep, i} &= 0 \ \mbox{in}\ \Omega 
	\label{A14}
	\end{align} 
for any $i = 1,\dots, N(\ep)$. Note carefully that
\[
\int_{Q_{\ep,i}} \Div \vc{E}_{\ep, i}[\bfphi] = 0 
\]
so that the operator $\mathcal{B}_{\ep,i}$ is well defined. Thus, a suitable approximation of $\bfphi$ can be defined as
\begin{equation} \label{A15}
	\bfphi_\ep (x) = \left\{ \begin{array}{l} \bfphi_{\ep,i} (x) \ \mbox{if}\ x \in B_{2r_i(\ep)}(x_i(\ep)), \\ 
		\bfphi \ \mbox{otherwise.} \end{array} \right.
\end{equation}

Now, we compute the error 
\begin{equation} \label{A16} 
\| \Grad \bfphi - \Grad \bfphi_\ep \|_{L^q(\Omega; R^{d \times d})}^q \leq 
\sum_{i = 1}^{N(\ep)} \| \Grad \bfphi \|_{L^q(B_{2r_i(\ep)}(x_i(\ep)); R^{d \times d})}^q
+
\sum_{i = 1}^{N(\ep)} \| \Grad \bfphi_{\ep,i} \|_{L^q(B_{2r_i(\ep)}(x_i(\ep)); R^{d \times d})}^q.
\end{equation}
As $\bfphi$ is continuously differentiable, we get, on the one hand, 
\begin{equation} \label{A17}
\sum_{i = 1}^{N(\ep)} \| \Grad \bfphi \|_{L^q(B_{2r_i(\ep)}(x_i(\ep)); R^{d \times d})}^q \aleq \sum_{i = 1}^{N(\ep)} r_i(\ep)^3. 
\end{equation}	
On the other hand, by virtue of the uniform bounds \eqref{A13},
\[ 
\sum_{i = 1}^{N(\ep)} \| \Grad \bfphi_{\ep,i} \|_{L^q(B_{2r_i(\ep)}(x_i(\ep)); R^{d \times d})}^q \aleq 
\sum_{i=1}^{N(\ep)} \| \Grad {\bf E}_{\ep,i} [\bfphi] \|_{L^q(B_{2r_i(\ep)}(x_i(\ep)); R^{d \times d})}^q .
\]
As 
\begin{equation} \label{A18}
\Grad {\bf E}_{\ep,i} [\bfphi] \aleq |\Grad \bfphi | + \frac{1}{r_i(\ep)} |\bfphi| 
\end{equation}
we may infer that
\begin{equation} \label{A19}
\sum_{i = 1}^{N(\ep)} \| \Grad \bfphi_{\ep,i} \|_{L^q(B_{2r_i(\ep)}(x_i(\ep)); R^{d \times d})}^q \aleq 	
\sum_{i = 1}^{N(\ep)} r_i(\ep)^{3 - q}.
\end{equation}
In particular, for $q = 2$, hypothesis \eqref{A7} yields {\bf (M2)}. Finally, \eqref{A8} follows from \eqref{A19}.

\end{proof}

For $d = 2$, we have a similar result. 

\begin{Proposition} \label{PA2}
	Let 
\[
\Ome = \Omega \setminus K_\ep ,\ K_\ep \subset \Omega \subset R^2. 
\]
Suppose there exists a family of balls $B_{r_i(\ep)}(x_i(\ep))$ of radii $r_i(\ep) \in (0,\frac12)$, $i=1,\dots, N(\ep)$ such that 
\begin{align} 
K_\ep &\subset \bigcup_{i = 1}^{N(\ep)} B_{r_i(\ep)}(x_i(\ep)), \br {\rm dist}[ x_i , \partial \Omega] &> \sqrt{r_i(\ep)} , 
\ {\rm dist}[ x_i, x_j ] > \sqrt{r_i(\ep)} + \sqrt{r_j(\ep)} \ \mbox{for}\ i \ne j, \br 
\sum_{i=1}^{N(\ep)} |\log r_i(\ep)|^{-1} &\to 0 \ \mbox{as}\ \ep \to 0 .	\label{A20}
	\end{align}

Then {\bf (M2)} holds. Moreover, if
\begin{equation} \label{A21}
	h(\ep) = \min_{1\leq i \leq N(\ep)} r_i(\ep) |\log r_i(\ep)|,
	\end{equation}
then	 {\bf (M2bis)} is satisfied for all $\gamma>1$, specifically, \eqref{P10} holds for all $r<\infty$.
	\end{Proposition}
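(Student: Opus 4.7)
My plan is to follow the template of the three--dimensional proof in Proposition \ref{PA1}, replacing the linear cutoff on the dyadic annulus $B_{2r_i} \setminus B_{r_i}$ by a logarithmic cutoff on the much larger annulus $Q_{\ep, i} = B_{\sqrt{r_i(\ep)}}(x_i(\ep)) \setminus B_{r_i(\ep)}(x_i(\ep))$. This is dictated by the fact that points have zero $W^{1,2}$--capacity in two dimensions, forcing a slower logarithmic transition between $0$ and $1$. The enlarged separation $\dist[x_i, x_j] > \sqrt{r_i} + \sqrt{r_j}$ and $\dist[x_i, \partial \Omega] > \sqrt{r_i}$ is exactly what is needed to keep the balls $B_{\sqrt{r_i(\ep)}}(x_i(\ep))$ pairwise disjoint and inside $\Omega$, so that the local constructions can be patched together as in \eqref{A15}.

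Concretely, for $\bfphi \in \DC(\Omega; R^2)$ with $\Div \bfphi = 0$ and $H \in C^\infty(R)$ as in \eqref{A9}, I set
\begin{equation*}
H_{\ep, i}(x) = H\!\left( \frac{2 \log(|x - x_i(\ep)|/r_i(\ep))}{|\log r_i(\ep)|} \right), \qquad \vc{E}_{\ep, i}[\bfphi] = \bfphi \, H_{\ep, i},
\end{equation*}
so that $H_{\ep, i}$ vanishes near $B_{r_i(\ep)}(x_i(\ep))$, equals $1$ outside $B_{\sqrt{r_i(\ep)}}(x_i(\ep))$, and satisfies $|\nabla H_{\ep, i}(x)| \aleq |\log r_i(\ep)|^{-1} |x - x_i(\ep)|^{-1}$ in between. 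Since $\Div \bfphi = 0$, one has the convenient identity $\Div \vc{E}_{\ep, i}[\bfphi] = \bfphi \cdot \nabla H_{\ep, i}$, supported in $Q_{\ep, i}$ with zero mean. I then correct with a Bogovski\u{\i} operator $\mathcal{B}_{\ep, i}$ on $Q_{\ep, i}$,
\begin{equation*}
\bfphi_{\ep, i} = \vc{E}_{\ep, i}[\bfphi] - \mathcal{B}_{\ep, i}\!\left[ \bfphi \cdot \nabla H_{\ep, i} \right],
\end{equation*}
and glue as in \eqref{A15} to produce a divergence--free $\bfphi_\ep \in W^{1, r}_0(\Ome; R^2)$ vanishing on $K_\ep$.

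The error is controlled by the explicit computation
\begin{equation*}
\int_{Q_{\ep, i}} |\bfphi \cdot \nabla H_{\ep, i}|^q \, dx \aleq \|\bfphi\|_\infty^q |\log r_i(\ep)|^{-q} \int_{r_i(\ep)}^{\sqrt{r_i(\ep)}} \rho^{1-q}\,d\rho,
\end{equation*}
which gives $\aleq |\log r_i|^{-1}$ at $q=2$ and $\aleq r_i^{\,2-q} |\log r_i|^{-q}$ for $q>2$. Summing over $i$ and invoking the new hypothesis $\sum_i |\log r_i(\ep)|^{-1} \to 0$ yields the strong $W^{1,2}$ convergence required in (M2). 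For (M2bis) with $h(\ep) = \min_i r_i(\ep) |\log r_i(\ep)|$ and any $r > 2$ one obtains
\begin{equation*}
\| \Grad \bfphi_\ep \|_{L^r(\Omega)}^r \aleq \sum_i r_i^{\,2}\, \big( r_i |\log r_i| \big)^{-r} \leq h(\ep)^{-r} \sum_i r_i(\ep)^2 \aleq h(\ep)^{-r},
\end{equation*}
since the pairwise disjointness of the balls $B_{\sqrt{r_i(\ep)}/2}(x_i(\ep)) \subset \Omega$ forces $\sum_i r_i(\ep) \aleq 1$, hence $\sum_i r_i(\ep)^2 \aleq 1$. The bound being valid for every $r < \infty$ matches the range $r > \gamma/(\gamma-1)$ appearing in \eqref{P9} for arbitrary $\gamma > 1$.

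The principal obstacle is the Bogovski\u{\i} estimate on $Q_{\ep, i}$: the aspect ratio $r_i^{-1/2}$ blows up, so the straightforward scaling reduction to a fixed reference annulus that works in three dimensions is unavailable. I would address this by decomposing $Q_{\ep, i}$ into $\sim |\log r_i|$ dyadic sub--annuli $B_{2^k r_i} \setminus B_{2^{k-1} r_i}$ of fixed aspect ratio, applying the classical scale--invariant Bogovski\u{\i} bound on each shell, and distributing the mean of $\bfphi \cdot \nabla H_{\ep, i}$ between neighbouring shells by standard transfer functions. The logarithmic loss in the number of shells is comfortably absorbed by the $|\log r_i|^{-1}$ smallness already built into $\Grad H_{\ep, i}$, so neither the $L^2$ error of (M2) nor the blow--up rate of (M2bis) is affected.
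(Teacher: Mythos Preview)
Your construction is essentially the paper's: the same logarithmic cutoff on the annulus $Q_{\ep,i} = B_{\sqrt{r_i(\ep)}}(x_i(\ep)) \setminus B_{r_i(\ep)}(x_i(\ep))$, the same Bogovski\u{\i} correction, and the same estimates for {\bf (M2)} and {\bf (M2bis)}. The only substantive difference is how you justify that the Bogovski\u{\i} constant on $Q_{\ep,i}$ is uniform in $\ep$. The paper dispatches this in one line by observing that $Q_{\ep,i}$ is (after rescaling) a John domain with constant independent of $r_i(\ep)$ and invoking \cite[Theorem~5.2]{DieRuzSch}; this is precisely why the hypothesis $r_i(\ep) < \tfrac12$ is imposed.

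Your dyadic-shell alternative is workable, but the stated reasoning is off. A naive chain of $M \sim |\log r_i|$ shells with transfer functions typically costs a factor $M$ in the Bogovski\u{\i} bound, and at $q=2$ this would give $\| \Grad \mathcal{B}_{\ep,i}[\,\cdot\,]\|_{L^2}^2 \aleq |\log r_i|^2 \cdot |\log r_i|^{-1} = |\log r_i| \to \infty$, so the loss is \emph{not} absorbed. What actually saves your construction is a feature you did not mention: since $H_{\ep,i}$ is radial about $x_i(\ep)$ and $\Div \bfphi = 0$, one has $\bfphi \cdot \Grad H_{\ep,i} = \Div(H_{\ep,i}\bfphi)$, and on each dyadic shell $A_k = B_{2^k r_i}(x_i) \setminus B_{2^{k-1} r_i}(x_i)$ the boundary flux $\int_{\partial A_k} H_{\ep,i}\,\bfphi \cdot \vc{n}$ vanishes (because $H_{\ep,i}$ is constant on each circle and $\bfphi$ has zero flux through any circle). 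Hence $\int_{A_k} \bfphi \cdot \Grad H_{\ep,i} = 0$ for every $k$, no inter-shell transfer is needed, and the scale-invariant Bogovski\u{\i} bound on each fixed-aspect-ratio shell yields a uniform constant with no logarithmic loss at all. Either cite the John-domain result as the paper does, or make this zero-mean observation explicit.
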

	
\begin{Remark}
	Compared to Proposition \ref{PA1}, the assumption $r_i(\ep)\in (0,\frac12)$ is made to ensure that a uniformly bounded Bogovski\u{\i} operator associated to the domain $B_{\sqrt{r_i(\ep)}}(x_i(\ep)) \setminus B_{r_i(\ep)} (x_i(\ep))$ exists, see \cite[Theorem~5.2]{DieRuzSch}.
\end{Remark}

\begin{proof}[Proof of Proposition \ref{PA2}]
	
	For $\bfphi \in C^1_c(\Omega; R^d)$, $\Div \bfphi = 0$, this time we consider the function 
\begin{align*} 
	\tilde{H}(Z)=\begin{cases}
	0 & {\rm if} \ 0\leq Z < r_i(\ep),\\
	2\bigg(1-\frac{\log Z}{\log r_i(\ep)}\bigg) & {\rm if} \ r_i(\ep)\leq Z < \sqrt{r_i(\ep)},\\
	1 & {\rm if} \ Z\geq \sqrt{r_i(\ep)},
	\end{cases}
\end{align*}
which we may convolute by an admissible convolution kernel to ensure $\tilde{H}\in C^\infty([0,\infty))$. Set now 
\begin{equation*}
\vc{E}_{\ep,i}[\bfphi] =  \bfphi(x) \tilde{H} \big(| x - x_i(\ep) |\big), 
\end{equation*}
and as before
\begin{equation*}
	\bfphi_{\ep,i} = \vc{E}_{\ep,i}[\bfphi] - \mathcal{B}_{\ep,i}[\Div \vc{E}_{\ep,i} [\bfphi]],
\end{equation*}
where $\mathcal{B}_{\ep,i}$ is the Bogovski\u{\i} operator associated to the domain
\[
Q_{i,\ep} = B_{\sqrt{r_i(\ep)}}(x_i(\ep)) \setminus B_{r_i(\ep)} (x_i(\ep))
\]
and enjoying the same properties \eqref{A12} and \eqref{A13}. Again, we have 
\begin{align*} 
	\bfphi_{\ep, i} &= 0 \ \mbox{in}\ B_{r_i(\ep)}(x_i(\ep)) \ , \br 
	\bfphi_{\ep, i} &= \bfphi \ \mbox{in}\ \Omega \setminus B_{\sqrt{r_i(\ep)}}(x_i(\ep)), \br 
	\Div \bfphi_{\ep, i} &= 0 \ \mbox{in}\ \Omega 
	\end{align*} 
for any $i = 1,\dots, N(\ep)$. Set finally
\begin{equation*}
	\bfphi_\ep (x) = \left\{ \begin{array}{l} \bfphi_{\ep,i} (x) \ \mbox{if}\ x \in B_{\sqrt{r_i(\ep)}}(x_i(\ep)), \\ 
		\bfphi \ \mbox{otherwise.} \end{array} \right.
\end{equation*}

By continuous differentiability of $\bfphi$, we have 
\begin{align*} 
\| \Grad \bfphi - \Grad \bfphi_\ep \|_{L^q(\Omega; R^{d \times d})}^q &\leq 
\sum_{i = 1}^{N(\ep)} \| \Grad \bfphi \|_{L^q(B_{\sqrt{r_i(\ep)}}(x_i(\ep)); R^{d \times d})}^q
+
\sum_{i = 1}^{N(\ep)} \| \Grad \bfphi_{\ep,i} \|_{L^q(B_{\sqrt{r_i(\ep)}}(x_i(\ep)); R^{d \times d})}^q\\
&\aleq \sum_{i = 1}^{N(\ep)} r_i(\ep) + \sum_{i = 1}^{N(\ep)} \| \Grad \bfphi_{\ep,i} \|_{L^q(B_{\sqrt{r_i(\ep)}}(x_i(\ep)); R^{d \times d})}^q.
\end{align*}
For the remaining term, we first calculate
\begin{equation*}
\Grad {\bf E}_{\ep,i} [\bfphi] \aleq |\Grad \bfphi | + \frac{1}{|x-x_i(\ep)| \, |\log r_i(\ep)|} |\bfphi| 
\end{equation*}
to see that
\begin{align*} %\label{A19}
\sum_{i = 1}^{N(\ep)} \| \Grad \bfphi_{\ep,i} \|_{L^q(B_{\sqrt{r_i(\ep)}}(x_i(\ep)); R^{d \times d})}^q &\aleq \sum_{i = 1}^{N(\ep)} \| \Grad \vc{E}_{\ep,i}[\bfphi] \|_{L^q(B_{\sqrt{r_i(\ep)}}(x_i(\ep)); R^{d \times d})}^q\\
&\aleq \sum_{i = 1}^{N(\ep)} \begin{cases}
|\log r_i(\ep)|^{-q} |r_i(\ep)^\frac{2-q}{2}-r_i(\ep)^{2-q}| & \text{if } q\neq 2,\\
|\log r_i(\ep)|^{-1} & \text{if } q=2.
\end{cases}
\end{align*}
In particular, for $q = 2$, hypothesis \eqref{A20} yields {\bf (M2)}. To show \eqref{A21}, note that for $q \neq 2$
\begin{align*}
\sum_{i = 1}^{N(\ep)} |\log r_i(\ep)|^{-q} |r_i(\ep)^\frac{2-q}{2}-r_i(\ep)^{2-q}| &\lesssim \bigg(\min_{1\leq i\leq N(\ep)} r_i(\ep) |\log r_i(\ep)|\bigg)^{-q} \bigg( \sum_{i = 1}^{N(\ep)} r_i(\ep)^{\min\big\{\frac{2+q}{2}, 2\big\}} \bigg) \\
&\lesssim \bigg(\min_{1\leq i\leq N(\ep)} r_i(\ep) |\log r_i(\ep)|\bigg)^{-q},
\end{align*}
and for $q=2$
\begin{align*}
\sum_{i = 1}^{N(\ep)} |\log r_i(\ep)|^{-1} &\leq \bigg(\min_{1\leq i\leq N(\ep)} r_i(\ep) |\log r_i(\ep)|\bigg)^{-2} \bigg( \sum_{i = 1}^{N(\ep)} r_i(\ep)^2 |\log r_i(\ep)| \bigg) \\
&\leq \bigg(\min_{1\leq i\leq N(\ep)} r_i(\ep) |\log r_i(\ep)|\bigg)^{-2} \bigg( \sum_{i = 1}^{N(\ep)} |\log r_i(\ep)|^{-1} \bigg) \\
&\leq \bigg(\min_{1\leq i\leq N(\ep)} r_i(\ep) |\log r_i(\ep)|\bigg)^{-2}.
\end{align*}

\end{proof}

Obviously, the above results are strongly related to problems in homogenization, see e.g. 
Allaire \cite{Allai4}, \cite{Allai3} or Tartar \cite{Tar1}.

\subsection{Domains containing thin frames}

In certain situations, hypothesis {\bf (M2)} may be difficult to verify because of the necessary solenoidality of the approximate sequence. We show that this condition can be relaxed provided the family $(\Omega_\ep)_{\ep > 0}$ are \emph{uniformly John domains}. Following Diening et al. \cite{DieRuzSch}, we say that $(\Omega_\ep)_{\ep > 0}$ is uniformly John of order $\alpha > 0$, if every pair of distinct points $x_1, x_2 \in \Omega_\ep$ can be connected by a rectifiable curve $\beta$ such that 
\[
{\rm cig}(\beta; \alpha) = \bigcup_{t \in [0, |\beta|] } \left\{ B \left( \beta(t); \frac{1}{\alpha} \min \left\{ t, |\beta|- t \right\} \right) \right\} \subset \Omega_\ep.
\] 

We claim the following result. 

\providecommand{\bfz}{\boldsymbol{\zeta}}
\begin{Proposition} \label{PPP1}
	
Let $\Omega \subset R^d$, $d \in \{2,3\}$ be a bounded domain and $K_\ep \subset \Omega$ a family of compact sets such that $(\Omega_\ep = \Omega \setminus K_\ep)_{\ep > 0}$ are uniformly John domains of order $\alpha$. Suppose the following holds:
	
\item {\bf(M2')} For any $\bfz \in C^1_c (\Omega; R^d)$, there exists a sequence 
$(\bfz_\ep)_{\ep > 0}$ with
\begin{equation} \label{PP8}
	\bfz_\ep \in W^{1,2}_0(\Omega_\ep ; R^d), \ \bfz_\ep \to \bfz \ \mbox{strongly in}\ W^{1,2}_0(\Omega; R^d).
\end{equation}

Then {\bf (M2)} holds.	
	
\end{Proposition}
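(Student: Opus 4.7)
The plan is to derive {\bf (M2)} from {\bf (M2')} by first approximating $\bfphi$ by any (not necessarily solenoidal) $\bfz_\ep$ provided by {\bf (M2')}, and then correcting the divergence by a Bogovski\u{\i}-type operator $\mathcal{B}_\ep$ supported in $\Omega_\ep$. Concretely, given $\bfphi \in C^1_c(\Omega; R^d)$ with $\Div \bfphi = 0$, I would take $\bfz_\ep \in W^{1,2}_0(\Omega_\ep; R^d)$ with $\bfz_\ep \to \bfphi$ strongly in $W^{1,2}_0(\Omega; R^d)$ as furnished by {\bf (M2')}, and set
\[
\bfphi_\ep := \bfz_\ep - \mathcal{B}_\ep\!\left[\Div \bfz_\ep\right].
\]
Since $\bfz_\ep$ has zero trace on $\partial \Omega_\ep$, the compatibility condition $\int_{\Omega_\ep} \Div \bfz_\ep \dx = 0$ is automatic, so $\mathcal{B}_\ep$ applies and by construction $\bfphi_\ep \in W^{1,2}_0(\Omega_\ep; R^d)$ with $\Div \bfphi_\ep = 0$.

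The essential ingredient is a uniform estimate $\|\Grad \mathcal{B}_\ep[h]\|_{L^2(\Omega_\ep)} \leq C \|h\|_{L^2(\Omega_\ep)}$ with $C$ independent of $\ep$. This is precisely what the uniformly John hypothesis is designed to buy: by Diening, Ruzicka and Schumacher \cite{DieRuzSch}, every John domain of order $\alpha$ admits a Bogovski\u{\i} operator whose continuity constant depends only on $\alpha$, on the dimension, and on the diameter of the domain. Since all $(\Omega_\ep)_{\ep > 0}$ are John of the same order $\alpha$ and uniformly contained in the bounded $\Omega$, this yields the desired $\ep$-independent bound.

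With that uniform estimate in hand, convergence is immediate: the $W^{1,2}$-strong convergence of $\bfz_\ep$ to $\bfphi$ forces $\Div \bfz_\ep \to \Div \bfphi = 0$ strongly in $L^2$, whence
\[
\|\Grad \mathcal{B}_\ep[\Div \bfz_\ep]\|_{L^2(\Omega_\ep; R^{d \times d})} \aleq \|\Div \bfz_\ep\|_{L^2(\Omega_\ep)} \to 0.
\]
Extending $\bfphi_\ep$ by zero to $D$ and combining this with the convergence of $\bfz_\ep$ gives $\bfphi_\ep \to \bfphi$ strongly in $W^{1,2}_0(D; R^d)$, which is exactly {\bf (M2)}. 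The only real obstacle is the uniform Bogovski\u{\i} estimate on John domains; once \cite{DieRuzSch} is invoked, the remainder is a one-line correction argument, and the uniformity of the John constant is precisely the reason why the merely topological condition {\bf (M2')} can be upgraded to the solenoidal {\bf (M2)}.
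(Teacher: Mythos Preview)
Your argument is correct and in fact more direct than the paper's. Both proofs hinge on the same reference \cite{DieRuzSch} to exploit the uniformly John hypothesis, but they use different (essentially dual) consequences of it. The paper defines $\bfphi_\ep$ as the solution of the Stokes problem $-\Del \bfphi_\ep + \Grad \Pi_\ep = -\Del \bfphi$ in $\Omega_\ep$ with homogeneous Dirichlet data, invokes the Negative norm theorem \cite[Theorem~5.10]{DieRuzSch} to bound the pressure $\Pi_\ep$ uniformly in $L^2(\Omega_\ep)$, and then uses {\bf (M2')} to pass to the limit in the weak formulation of the Stokes problem, identifying the weak limit as $\bfphi$ and upgrading to strong convergence via convergence of norms. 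Your route instead applies {\bf (M2')} directly to $\bfphi$ and corrects the divergence of the resulting $\bfz_\ep$ with a Bogovski\u{\i} operator whose bound is uniform in $\ep$ by \cite{DieRuzSch}; since $\Div \bfz_\ep \to 0$ in $L^2$, the correction vanishes and strong convergence is immediate. Your approach is shorter and avoids the Stokes solver and the weak-to-strong upgrade; the paper's Stokes-based construction, on the other hand, is the same mechanism used in Lemma~\ref{LA1} and would more naturally extend to $W^{1,r}$ estimates (hence towards {\bf (M2bis)}) if one had uniform $L^r$ Stokes bounds, though that is not claimed here.
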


\begin{proof}
	
Let $\bfphi \in C_c^1(\Omega; R^d)$ with $\Div \bfphi = 0$. The approximate sequence $(\bfphi_\ep)_{\ep>0}$ desired in {\bf (M2)} is constructed as the unique solution of the Stokes problem:
\begin{equation} \label{PP9}
	-\Del \bfphi_\ep + \Grad \Pi_\ep = -\Del \bfphi,\ \Div \bfphi_\ep = 0 \ \mbox{in}\ \Omega_\ep,\ \bfphi_\ep|_{\partial \Omega_\ep} = 0.  	
\end{equation}
Extending $\bfphi_\ep$ to be zero outside $\Ome$ we report the standard estimates 
\begin{equation*}% \label{PP10}
\| \bfphi_\ep \|_{W^{1,2}_0 (\Omega;R^d) } \aleq \| \Grad \bfphi \|_{L^2(\Omega;R^d)},\ 
\| \Grad \Pi_\ep \|_{W^{-1,2}(\Ome; R^d)} \aleq \| \Grad \bfphi \|_{L^2(\Omega; R^d)} \ \mbox{uniformly for}\ \ep \to 0.
\end{equation*}
Since the family $(\Ome)_{\ep > 0}$ is uniformly John, the Negative norm theorem \cite[Theorem 5.10]{DieRuzSch} yields
\[ 
\| \Pi_\ep \|_{L^2(\Ome)} \aleq  \| \Grad \bfphi \|_{L^2(\Omega;R^d)} \ \mbox{as long as we normalize the pressure,}
\ \intOe{ \Pi_\ep } = 0.
\]	
Consequently, at least for a suitable subsequence, 
\begin{equation} \label{PP12}
\bfphi_\ep \to \wbfphi \ \mbox{weakly in}\ W^{1,2}_0 (\Omega; R^d),\ 
\mathds{1}_{\Ome} \Pi_\ep \to \Pi \ \mbox{weakly in}\ L^2(\Omega).
\end{equation}
Next, let $\bfz \in C_c^1(\Omega; R^d)$. Using hypothesis {\bf (M2')}, we test equation \eqref{PP9} against the approximating sequence $\bfz_\ep \in W_0^{1,2}(\Ome; R^d)$ and send $\ep \to 0$ to obtain 
\begin{align*}
\intO{ \Grad \wbfphi : \Grad \bfz - \Pi \Div \bfz} &= \lim_{\ep \to 0} \intO{\Grad \bfphi_\ep : \Grad \bfz_\ep - \mathds{1}_{\Ome} \Pi_\ep \Div \bfz_\ep} \\
&= \lim_{\ep \to 0} \intO{\Grad \bfphi : \Grad \bfz_\ep} = \intO{\Grad \bfphi : \Grad \bfz}.
\end{align*}
Since $\bfz$ was arbitrary, this yields
\begin{equation} \label{PP11}
-\Del \wbfphi + \Grad \Pi = -\Del \bfphi,\ \Div \wbfphi = 0 \ \mbox{in}\ \Omega,\ 
\bfphi|_{\partial \Omega} = 0.	
\end{equation}
Thus, by uniqueness of solutions to the Stokes problem \eqref{PP11}, we conclude 
\[
\bfphi = \wbfphi \ \mbox{as}\ \Div\bfphi = 0 \ \mbox{in}\ \Omega.
\]

Finally, the relations \eqref{PP9}--\eqref{PP11} imply 
\[
\intO{ |\Grad \wbfphi_\ep |^2 } \to  \intO{ |\Grad \wbfphi |^2 } = \intO{ |\Grad \bfphi |^2 },
\]
which yields the strong convergence required in {\bf (M2)}.
	
\end{proof}

As an application, we may consider a bounded domain $\Omega \subset R^3$ and a \emph{finite} number of compact non-intersecting curves $\{ \beta_i \}_{i=1}^N$, $\beta_i \subset \Omega$, $i=1,\dots,N$, together with a family of compact sets 
\[
K_\ep = \bigcup_{i=1}^N \Ov{ \mathcal{U}_\ep \left[ \beta_i \right] },\ \mathcal{U}_\ep [\beta_i] = \left\{ x \in \Omega \ \Big|\ {\rm dist}[x, \beta_i] < \ep \right\}.
\]  
It is easy to check that the family $\Omega_\ep = \Omega \setminus K_\ep$ are uniformly John domains (for small $\ep > 0$) as they satisfy the uniform cone property. Moreover, as the Newtonian capacity of a compact curve in $R^3$ is zero, hypothesis {\bf (M2')} holds together with the conclusion of Proposition \ref{PPP1}.

\begin{Remark} \label{RSR1}
	
The result can be extended to a family of $N(\ep)$ curves, $N(\ep) \to \infty$, under suitable restrictions imposed on the geometry of the curves and their number $N$ as a function of $\ep$. In particular, the approach developed by Namlyeyeva et al. \cite{NamNecSkr} can be adapted to handle the case of straight lines (cylinders). We leave the details to the interested reader.
	
\end{Remark}

%\bibliography{citaceF}
%\bibliography{../RandomBog/Lit}

%\bibliographystyle{plain}

\def\cprime{$'$} \def\ocirc#1{\ifmmode\setbox0=\hbox{$#1$}\dimen0=\ht0
	\advance\dimen0 by1pt\rlap{\hbox to\wd0{\hss\raise\dimen0
			\hbox{\hskip.2em$\scriptscriptstyle\circ$}\hss}}#1\else {\accent"17 #1}\fi}

\end{document}